\renewcommand {\a}{ \alpha }
\newcommand{\e}{\epsilon}
\newcommand{\G}{\Gamma}
\newcommand{\vark}{\varkappa}
\renewcommand{\l}{\lambda}
\renewcommand{\L}{\Lambda}
\newcommand{\p}{\partial}
\newcommand{\Om}{\Omega}
\newcommand{\R}{ \mathbb R}
\newcommand {\GA}{\mathfrak A}
\newcommand {\GS}{\mathfrak S}
\newcommand {\GW}{\mathfrak W}
\newcommand {\GU}{\mathfrak U}
\newcommand {\bx}{\mathbf x}
\newcommand {\by}{\mathbf y}
\newcommand {\bn}{\mathbf n}
\newcommand {\bxi}{\boldsymbol\xi}
\newcommand{\plainC}[1]{\textup{{\textsf{C}}}^{#1}}
\newcommand{\plainL}[1]{\textup{{\textsf{L}}}^{#1}}
\DeclareMathOperator{\tr}{{tr}}
\DeclareMathOperator{\op}{{Op}}
\newtheorem{thm}{Theorem}
\newtheorem{lem}[thm]{Lemma}
\newtheorem{prop}[thm]{Proposition}
\theoremstyle{remark}
\newtheorem{rem}[thm]{Remark}
\begin{document}
\hoffset -4pc

\title
[On the Hankel-type operators]
{{On Hankel-type operators with discontinuous symbols in higher dimensions}}
\author{A.V. Sobolev}
\address{Department of Mathematics\\ University College London\\
Gower Street\\ London\\ WC1E 6BT UK}
\email{asobolev@math.ucl.ac.uk}
\keywords{Hankel operators, pseudo-differential operators with discontinuous symbols,
quasi-classical asymptotics}
\subjclass[2010]{Primary  47G30; Secondary 45M05, 47B10, 47B35}
\date{\today}

\begin{abstract}
We obtain an asymptotic formula for the counting function of the discrete spectrum for
 Hankel-type pseudo-differential operators with discontinuous symbols.
\end{abstract}

\maketitle

\section{Introduction}

Consider on $\plainL2(a, b), 0\le a< b\le\infty$ the integral operator of the form
\begin{equation}\label{hankel:eq}
(\G_{a, b}(k)  u)(x) = \int_a^b k(x+y) u(y) dy,
\end{equation}
with some function $k=k(t), t>0$.
The operator $\G_{0, \infty}(k)$ is called
\textsl{Hankel operator on $\plainL2(0, \infty)$}, see \cite{Peller}, p. 46.
For $\G_{a, b}(k)$ with $0\le a< b <\infty$ we use the term \textsl{truncated Hankel operator}.
The \textsl{symbol} $\vark = \vark(\xi), \xi\in\R,$ of the
operator $\G_{0, \infty}(k)$
is defined (non-uniquely) as a function such that its Fourier transform
$\hat\vark(t)$ coincides with $k(t)$ for all $t >0$.
We are interested in the case, when the
symbol $ \vark(\xi)$
is a bounded function with jump discontinuities, which ensures that
the operator $\G_{0, \infty}(k)$ is not compact.
The leading example of
such an operator is given by the \textsl{Carleman kernel}
$k(t) = t^{-1}, t>0$ (see \cite{Peller}, p. 54), for which one can choose
\begin{equation*}
\vark(\xi) =
\begin{cases}
-\pi i, \xi \le 0,\\
\pi i, \xi >0.
\end{cases}
\end{equation*}
The operator $\G_{a, b}(k)$ with this symbol is bounded for all $a$ and $b$,
$0\le a < b\le\infty$, and $\|\G_{a, \infty}(k)\| = \pi, a\ge 0$.
If $a = 0$ and/or $b = \infty$, then $\G_{a, b}(k)$ is not compact.
Among other results, H.S. Wilf
investigated the asymptotics of
the counting function of the discrete spectrum
of the truncated operator
$\G_{1, b}(k), k(t) = t^{-1},$ as $b \to\infty$ (see \cite{Wilf}, Corollary 1).
He proved that the number of eigenvalues of $\G_{1, b}(k)$ in the interval
$(\l, \infty)$ for any $\l\in (0, \pi)$ is asymptotically
equal to $C(\l) \log b$ as $b\to\infty$,
with some explicit constant $C(\l)$.
H. Widom in \cite{W_Hankel}, Theorem 4.3 derived a
similar asymptotic formula for the truncated \textsl{Hilbert matrix} (i.e.
matrix with the entries
$(j+k+1)^{-1}, \ \  j, k =  0, 1, 2, \dots$), as well as for
some more general Hankel matrices.
Later, a good deal of attention
became focused on the asymptotics of the
determinants of truncated Hankel (and Toeplitz) matrices, see  e.g.
\cite{Bas1}, \cite{Bas2}, \cite{Bas3} and \cite{DIK}.

The Hankel operator $\G_{0, \infty}(k)$ can be rewritten in the form
\begin{equation*}
(\G_{0, \infty}(k)u)(x) = (\tilde G(k)u)(-x),\ x >0,\ \
\end{equation*}
where
\begin{equation*}
(\tilde G(k) u)(x) = \bigl(1-\chi_{(0, \infty)}(x)\bigr)\int_{-\infty}^\infty k(y-x)
\chi_{(0,\infty)}(y)u(y) dy.
\end{equation*}
Here $\chi_{(0, \infty)}$ denotes the characteristic function of the half-axis
$(0, \infty)$. As A. Pushnitski and D. Yafaev \cite{PY} indicated to the author,
in the scattering theory context  it is natural to consider alongside $\G_{0, \infty}(k)$
the \textsl{symmetrised Hankel operator}
\begin{equation}\label{modi:eq}
\tilde H(k) = \tilde G(k) + \tilde G^*(k).
\end{equation}
In this note we study a multi-dimensional analog
of the truncated symmetrised Hankel operator
with a discontinuous symbol.
It is defined in the following way.
Let $\op_\a^l(a)$ and $\op_\a^r(a)$ be the standard ``left" and ``right"
pseudo-differential operators  with the smooth symbol $a=a(\bx, \bxi)$,
$\bx, \bxi\in\R^d, d\ge 1$, i.e.
\begin{equation*}
(\op^l_\a a) u(\bx) = \biggl(\frac{ \a}{2\pi}\biggr)^d
\int\int e^{i\a(\bx-\by)\bxi} a(\bx, \bxi) u(\by) d\bxi d\by,
\end{equation*}
\begin{equation*}
(\op^r_\a a) u(\bx) = \biggl(\frac{\a}{2\pi}\biggr)^d
\int\int e^{i\a(\bx-\by)\bxi} a(\by, \bxi) u(\by) d\bxi d\by,
\end{equation*}
for any function $u$ from the Schwartz class on $\R^d$. If
the symbol $a$ depends only on $\bxi$, then the above operators coincide with each other
and we simply write $\op_\a(a)$.
Here
and below integrals without indication of the domain are assumed to be taken over the entire
Euclidean space $\R^d$. The large constant
$\a\ge 1$ can be thought of as a truncation parameter.
The conditions imposed on the symbol $a$ in the main Theorem \ref{main:thm} below ensure that
the above operators are trace class for all $\a \ge 1$.

In order to introduce the jump discontinuities,
let $\L, \Om$ be two domains in $\R^d$, and let $\chi_{\L}(\bx)$, $\chi_{\Om}(\bxi)$
be their characteristic functions. We use the notation
\begin{equation*}
P_{\Om,\a} = \op_\a(\chi_{\Om}).
\end{equation*}
Define the operator
\begin{equation*}
T_\a(a) = T_\a(a; \L, \Om) = \chi_{\L} P_{\Om, \a}\op^l_\a(a) P_{\Om, \a}\chi_{\L},
\end{equation*}
and its off-diagonal version
\begin{equation*}
G_\a(a) = G_\a(a; \L, \Om) = (1-\chi_{\L})
P_{\Om, \a} \op_\a^l(a) P_{\Om, \a} \chi_{\L}.
\end{equation*}
The central object for us is the following Hankel-type self-adjoint operator
\begin{equation*}
H_\a(a) = H_\a(a; \L, \Om) = G_\a(a; \L, \Om) + G^*_\a(a; \L, \Om),
\end{equation*}
which is a natural multi-dimensional analogue
of the truncated symmetrised operator \eqref{modi:eq}.
Note the following elementary property of
$H_\a(a)$. Let $U$ be the unitary operator in $\plainL2(\R^d)$ defined by
\begin{equation*}
Uu = u\chi_{\L} - u(1-\chi_{\L}), u\in\plainL2(\R^d),
\end{equation*}
so that $U^* = U$. Then $U^* H_\a(a) U = - H_\a(a)$. This implies, in particular, that
the spectrum of $H_\a(a)$ is symmetric w.r.t. zero, i.e.
\begin{equation}\label{symmetry1:eq}
\dim\ker \bigl(H_\a(a)-\l\bigr) = \dim\ker \bigl(H_\a(a)+\l\bigr),\ \l\in\R.
\end{equation}

Let $g$ be a function analytic in a disk of a sufficiently large radius,
such that $g(0) = 0$.
In 1982 H. Widom in \cite{W_conj} conjectured an asymptotic formula
for the trace $\tr g(T_\a)$, $\a\to\infty$,
which was subsequently
proved in \cite{Sob}.
In order to state this result define
for any symbol $b = b(\bx, \bxi)$, any
domains $\L, \Om$ and any $\plainC1$-surfaces $S, P$, the coefficients
\begin{equation}\label{w0:eq}
\GW_0(b) = \GW_0(b; \L, \Om)
= \frac{1}{(2\pi)^d} \int_{\L}\int_{\Om} b(\bx, \bxi) d\bxi d\bx,
\end{equation}
\begin{equation}\label{w1:eq}
\GW_1(b) = \GW_1(b ; S, P) = \frac{1}{(2\pi)^{d-1}}\int_{S} \int_{P} b(\bx, \bxi)
| \bn_{S}(\bx)\cdot\bn_{P}(\bxi) |  dS_{\bxi} dS_{\bx},
\end{equation}
where $\bn_{S}(\bx)$ and $\bn_P(\bxi)$ denote the exterior unit normals to
$S$ and $P$ at the points $\bx$ and $\bxi$ respectively.
Define also
\begin{equation}\label{GA:eq}
\GA(g; b) = \frac{1}{(2\pi)^2}\int_0^1 \frac{g(bt) - t g(b)}{t(1-t)} dt.
\end{equation}
Then, as conjectured in \cite{W_conj} and proved in \cite{Sob},
\begin{equation}\label{widom:eq}
\tr g\bigl( T_\a(a)\bigr)
= \a^d \ \GW_0\bigl(g(a); \L, \Om\bigr)
+ \a^{d-1} \log\a \ \GW_1\bigl(\GA(g; a); \p\L, \p\Om\bigr) + o(\a^{d-1}\log\a),
\end{equation}
The aim of this note is to establish a similar trace formula for the operator $H_\a(a)$,
see Theorem \ref{main:thm}.

We use the standard notation
$\GS_1$ for the trace class operators, and $\GS_2$ for the Hilbert-Schmidt operators,
see e.g. \cite{BS}.
The underlying Hilbert space is assumed to be
 $\plainL2(\R^d)$.
By $C, c$ (with or without indices) we denote various positive
constants independent of $\a$,
whose precise value is of no importance.

\textbf{Acknowledgment.} The author is grateful to G. Rozenblum
for discussions, and to A. B\"ottcher and A. Pushnitski for stimulating remarks.
This work was supported in part by EPSRC grant EP/F029721/1.

\section{Main result}

\subsection{Definitions and main results}
To state the result define for any function $g=g(t), t\in\R$, such that $|g(t)|\le Ct$, the
integral
\begin{equation}\label{gu:eq}
\GU(g; b) = \frac{2}{\pi^2} \int_0^1\frac{g(b\frac{t}{2})}{t\sqrt{1-t^2}} dt.
\end{equation}
Denote by
\begin{equation*}
g_{\textup{\tiny ev}}(t) = \frac{g(t)+g(-t)}{2}
\end{equation*}
the even part of $g$. Let $\GW_1$ be as defined in \eqref{w1:eq}.
The next theorem contains the main result of the paper:

\begin{thm}\label{main:thm}
Let $\L, \Om\subset\R^d$, $d\ge 2$ be bounded
domains in $\R^d$ such that
$\L$ is $\plainC1$  and $\Om$ is $\plainC3$.
Let  $a = a(\bx, \bxi)$ be a symbol satisfying the condition
\begin{equation}\label{dersym:eq}
\max_{\substack{0\le n\le d+2\\ 0\le m\le d+2}}
\sup_{\bx, \bxi}
|\nabla_{\bx}^n \nabla_{\bxi}^m a(\bx, \bxi)|
< \infty,
\end{equation}
and having a compact support in both variables.
Let $g=g(t), t\in\R$ be a function such that
$g_{\textup{\tiny ev}}(t)  t^{-2}$ is continuous on $\R$. Then
\begin{align}\label{main:eq}
\tr  g(H_\a(a))
= \a^{d-1} \log\a\  \GW_1(\GU(g_{\textup{\tiny ev}}; |a|); \p\L, \p\Om) + o(\a^{d-1}\log\a),
\end{align}
as $\a\to\infty$.
\end{thm}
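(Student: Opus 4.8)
The starting point is the symmetry $U^*H_\a(a)U=-H_\a(a)$ noted before \eqref{symmetry1:eq}. Since $U$ is unitary, $\tr g(H_\a)=\tr g(-H_\a)$, so the odd part of $g$ cancels and $\tr g(H_\a)=\tr g_{\textup{\tiny ev}}(H_\a)$. Writing $H_\a$ in block form with respect to $\plainL2(\R^d)=\chi_{\L}\plainL2\oplus(1-\chi_{\L})\plainL2$ exhibits the off-diagonal structure $H_\a=\bigl(\begin{smallmatrix}0 & G_\a^*\\ G_\a & 0\end{smallmatrix}\bigr)$, whence $H_\a^2=\bigl(\begin{smallmatrix}G_\a^*G_\a & 0\\ 0 & G_\aG_\a^*\end{smallmatrix}\bigr)$. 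Introduce $\phi$ by $g_{\textup{\tiny ev}}(s)=\phi(s^2)$; the hypothesis that $g_{\textup{\tiny ev}}(t)\,t^{-2}$ be continuous is exactly the statement that $\phi$ is continuous with $\phi(u)=O(u)$ near $u=0$. Then $g_{\textup{\tiny ev}}(H_\a)=\phi(H_\a^2)$, and since $G_\a^*G_\a$ and $G_\aG_\a^*$ share the same non-zero spectrum with multiplicities while $\phi(0)=0$, I reduce to
\begin{equation*}
\tr g(H_\a(a))=2\,\tr\phi\bigl(G_\a^*(a)\,G_\a(a)\bigr).
\end{equation*}
The bound $\phi(u)=O(u)$ together with $G_\a\in\GS_2$ makes $\phi(G_\a^*G_\a)\in\GS_1$, so the right-hand side is well defined.

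Next I would study the asymptotics of $\tr\phi(G_\a^*G_\a)$. The plan is to approximate $\phi$ by polynomials $\sum_{n\ge1}c_n u^n$ and to control the remainder uniformly in $\a$ by Schatten-norm estimates for $G_\a$, thereby reducing matters to the traces $\tr(G_\a^*G_\a)^n$. Here one uses the pseudo-differential calculus with discontinuous symbols of \cite{Sob}. Note that $P_{\Om,\a}=\op_\a(\chi_{\Om})$ is a genuine orthogonal projection, namely the projection onto the $\a$-scaled frequencies lying in $\Om$, so the two relevant projections are $P:=\chi_{\L}$ and $Q:=P_{\Om,\a}$. Expanding $G_\a^*G_\a=\chi_{\L}P_{\Om,\a}\op^r_\a(\bar a)P_{\Om,\a}(1-\chi_{\L})P_{\Om,\a}\op^l_\a(a)P_{\Om,\a}\chi_{\L}$ and discarding trace-class remainders of order $o(\a^{d-1}\log\a)$, the problem localizes near the interface $\p\L\times\p\Om$; after a partition of unity, freezing the symbol at a point $(\bx_0,\bxi_0)\in\p\L\times\p\Om$ with value $a_0=a(\bx_0,\bxi_0)$, and flattening both boundaries, the computation reduces to a one-dimensional model transverse to the interface.

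In the model, $P$ and $Q$ become the projections onto a spatial, respectively frequency, half-space. On each invariant $2\times2$ block of the two-projections decomposition one has $Q=\bigl(\begin{smallmatrix}c^2 & cs\\ cs & s^2\end{smallmatrix}\bigr)$ with $c^2+s^2=1$, so that with $\op_\a^l(a_0)=a_0 I$ and $Q^2=Q$ the model off-diagonal operator is $G=a_0(1-P)QP$ and $G^*G=|a_0|^2\,PQ(1-P)QP$, which acts as multiplication by $|a_0|^2c^2s^2=|a_0|^2\lambda(1-\lambda)$, where $\lambda:=c^2$ is the spectral parameter of the projection pair $(P,Q)$. This $\lambda$ is precisely the parameter whose edge density $\tfrac{1}{\lambda(1-\lambda)}$ governs the $\a^{d-1}\log\a$ term of \eqref{widom:eq} through the functional $\GA$; the only change is that the function of $\lambda$ is now $\phi(|a_0|^2\lambda(1-\lambda))=g_{\textup{\tiny ev}}\bigl(|a_0|\sqrt{\lambda(1-\lambda)}\bigr)$, which vanishes at $\lambda=0,1$ (because $g_{\textup{\tiny ev}}(s)=O(s^2)$) so that no regularisation is needed. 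The substitution $t=2\sqrt{\lambda(1-\lambda)}$ then gives
\begin{equation*}
\int_0^1 g_{\textup{\tiny ev}}\bigl(|a_0|\sqrt{\lambda(1-\lambda)}\bigr)\frac{d\lambda}{\lambda(1-\lambda)}
=4\int_0^1\frac{g_{\textup{\tiny ev}}(|a_0|\,t/2)}{t\sqrt{1-t^2}}\,dt,
\end{equation*}
which is a constant multiple of $\GU(g_{\textup{\tiny ev}};|a_0|)$, the constant being fixed by the normalisations in \eqref{w1:eq} and \eqref{gu:eq}. Integrating the local contributions over $\p\L\times\p\Om$ against the weight $|\bn_{\p\L}(\bx)\cdot\bn_{\p\Om}(\bxi)|$, and accounting for the factor $2$ above, produces exactly $\GW_1(\GU(g_{\textup{\tiny ev}};|a|);\p\L,\p\Om)$, which is \eqref{main:eq}. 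In particular the phase of $a$ disappears because $G_\a^*G_\a$ depends on $|a|$ locally, explaining the absolute value in the theorem.

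The main obstacle is not the model spectral computation, which is organised by the two-projections calculus and the already-known density behind $\GA$, but the uniform Schatten-norm remainder estimates that justify both the polynomial approximation of $\phi$ and the localization to $\p\L\times\p\Om$ for \emph{variable} symbol and \emph{curved} boundaries. One must estimate the commutators $[\op_\a^l(a),\chi_{\L}]$ and $[\op_\a^l(a),P_{\Om,\a}]$ and the associated boundary-layer operators, and show that the contributions of the interior of $\L\times\Om$ and of the curvature of $\p\L,\p\Om$ enter only at order $o(\a^{d-1}\log\a)$. This is where the $\plainC1$-regularity of $\L$, the $\plainC3$-regularity of $\Om$, and the symbol bounds \eqref{dersym:eq} are used, in the same manner as in the trace-class estimates of \cite{Sob}.
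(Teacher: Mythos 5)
Your opening reductions coincide with the paper's: the symmetry $U^*H_\a U=-H_\a$ gives $\tr g(H_\a)=\tr g_{\textup{\tiny ev}}(H_\a)$, the off-diagonal block structure gives $H_\a^2=G_\a^*G_\a\oplus G_\a G_\a^*$ and hence $\tr g_{\textup{\tiny ev}}(H_\a)=2\tr\phi(G_\a^*G_\a)$ with $g_{\textup{\tiny ev}}(s)=\phi(s^2)$, and the passage from polynomials to general $g$ via $g_{\textup{\tiny ev}}(t)=p(t)+t^2r(t)$ together with the bound $\|G_\a\|_{\GS_2}^2\le C\a^{d-1}\log\a$ is exactly how the paper closes. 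Your local density computation (the substitution $t=2\sqrt{\l(1-\l)}$ turning $\int_0^1\phi(|a_0|^2\l(1-\l))\,d\l/(\l(1-\l))$ into a multiple of $\GU(g_{\textup{\tiny ev}};|a_0|)$) is also consistent with the coefficient in \eqref{main:eq}.

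The gap is in how you evaluate $\tr\bigl(G_\a^*G_\a\bigr)^p$. You propose to localize to $\p\L\times\p\Om$, freeze the symbol, flatten the boundaries, and run the two-projection model --- and you yourself flag that the uniform Schatten-norm remainders needed to justify this for variable symbols and curved boundaries are the ``main obstacle''. That obstacle is never resolved in your plan; carrying it out would amount to reproving the main localization machinery of \cite{Sob}. The paper avoids it entirely with one algebraic identity: for the constant symbol $1$, writing $G=G_\a(1)$ and $T=T_\a(1)$, one has $G^*G=T-T^2$ (because $\chi_\L$ and $P_{\Om,\a}$ are projections), so $g_p(G^*G)=h(T)$ with $h(t)=g_p(t-t^2)$; since $h(1)=0$ the Weyl term vanishes, and the weighted Widom formula of Proposition \ref{sobw:prop} --- already proved in \cite{Sob} with the test factor $\op_\a^l(b)$ --- yields the $\a^{d-1}\log\a$ coefficient $\GW_1(b\,\GA(h;1))=\tfrac12\GW_1(b\,\GU(g_{2p};1))$ directly. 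The variable symbol is then handled not by freezing it pointwise but by the commutator estimates of Propositions \ref{product:prop} and \ref{sandwich:prop}, which give $g_p\bigl(G_\a^*(a)G_\a(a)\bigr)=\op_\a^l(|a|^{2p})\,g_p(G^*G)+O(\a^{d-1})$ in $\GS_1$, so that one simply takes $b=|a|^{2p}$ above. To make your argument complete you should either import Proposition \ref{sobw:prop} together with this identity, or actually supply the localization and remainder estimates you deferred; as written, the central asymptotic evaluation is asserted rather than proved.
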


Note that the coefficient on the right-hand side of \eqref{main:eq}
is well-defined for any
function $g$ satisfying the conditions of Theorem \ref{main:thm}.
Note also that in view of \eqref{symmetry1:eq}, we have
\begin{equation}\label{symmetry2:eq}
\tr g(H_\a) = \tr g_{\textup{\tiny ev}}(H_\a).
\end{equation}

\begin{rem}
Denote by $n_\pm(\l; \a)$ with $\l >0$ the number of eigenvalues of
the operator $\pm H_\a(a)$ which are greater than $\l$.  In other words,
\[
n_\pm (\l_1; \a) = \tr \chi_I(\pm H_\a(a)),\ \ I = (\l, \|H_\a(a)\|+1).
\]
Since the interval $I$ does not contain the point $0$, this quantity is finite.
Due to \eqref{symmetry2:eq}, $n_+(\l;a) = n_-(\l; a)$.
In order to find the leading term of the asymptotics of
$n_{\pm}(\l; \a)$, $\a\to\infty$, approximate
the characteristic function $\chi_I$ from above and from below
by smooth functions $g$.
Then it follows from Theorem \ref{main:thm} that
\begin{equation}\label{mul:eq}
n_\pm(\l; \a) =  \frac{1}{2}\a^{d-1} \log\a  \
\GW_1\bigl(\GU(\chi_I; |a|)\bigr)  + o(\a^{d-1}\log\a).
\end{equation}
A straightforward calculation shows that
\begin{equation*}
\GU\bigl(\chi_I; |a(\bx, \bxi)|\bigr)
=
\begin{cases}
\dfrac{2}{\pi^2} \cosh^{-1} \dfrac{|a(\bx, \bxi)|}{2\l}, \ |a(\bx, \bxi)| > 2\l;\\[0.3cm]
0,\  | a(\bx, \bxi)| \le 2\l.
\end{cases}
\end{equation*}
The formula \eqref{mul:eq} can be viewed as a
multi-dimensional analogue of the asymptotics
derived for the Carleman kernel in \cite{Wilf}, Corollary 1.
\end{rem}

Theorem \ref{main:thm} will be derived from the
following theorem, which is simply formula \eqref{main:eq}
 for even polynomial functions $g$:

\begin{thm}\label{main_poly:thm}
Let the domains $\L, \Om\subset\R^d$, $d\ge 2$
and the symbol $a = a(\bx, \bxi)$ be as in Theorem \ref{main:thm}.
Then for $g_p(t) = t^p, p= 1, 2,  \dots, $
\begin{equation}\label{main_poly:eq}
\tr  g_{2p}(H_\a(a))
= \a^{d-1} \log\a\  \GW_1(\GU(g_{2p}; |a|); \p\L, \p\Om) + o(\a^{d-1}\log\a),
\end{equation}
as $\a\to\infty$.
\end{thm}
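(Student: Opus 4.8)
The plan is to reduce the even-power trace to a trace over one diagonal block, and then to feed the resulting polynomial expression into the trace asymptotics behind \eqref{widom:eq}.

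First I would exploit the off-diagonal structure of $G_\a$. Since $\chi_\L(1-\chi_\L)=0$, the operator $G_\a=(1-\chi_\L)P_{\Om,\a}\op^l_\a(a)P_{\Om,\a}\chi_\L$ satisfies $G_\a^2=0$, and likewise $(G_\a^*)^2=0$. Consequently
\[ H_\a^2=(G_\a+G_\a^*)^2=G_\a G_\a^*+G_\a^* G_\a, \]
and the two summands are supported on the orthogonal ranges of $1-\chi_\L$ and of $\chi_\L$. Hence $H_\a^{2p}=(G_\a G_\a^*)^p+(G_\a^* G_\a)^p$, and by cyclicity of the trace
\[ \tr g_{2p}(H_\a(a))=\tr H_\a^{2p}=2\,\tr\bigl(G_\a^* G_\a\bigr)^p. \]
This already encodes the symmetry \eqref{symmetry2:eq} and turns the problem into the asymptotics of $\tr(G_\a^* G_\a)^p$, where $G_\a^* G_\a=\chi_\L P_{\Om,\a}\op^r_\a(\bar a)P_{\Om,\a}(1-\chi_\L)P_{\Om,\a}\op^l_\a(a)P_{\Om,\a}\chi_\L$.

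Next I would expand $(G_\a^* G_\a)^p$ into a polynomial in the operators $\chi_\L$, $P_{\Om,\a}$, $\op^l_\a(a)$ and $\op^r_\a(\bar a)$, and apply the general two-term trace asymptotics for polynomials in pseudo-differential operators with the discontinuous symbols $\chi_\L,\chi_\Om$ that underlies \eqref{widom:eq} (see \cite{Sob}). That machinery yields an expansion $\a^d\,\GW_0(\cdots)+\a^{d-1}\log\a\,\GW_1(\cdots)+o(\a^{d-1}\log\a)$, and the crucial point is that the volume ($\a^d$) term vanishes. Indeed, the principal symbol is obtained by replacing $P_{\Om,\a}$ by the multiplier $\chi_\Om(\bxi)$ and $\chi_\L$ by $\chi_\L(\bx)$, all of which commute; in this limit the symbol of $G_\a$ contains the factor $(1-\chi_\L(\bx))\chi_\L(\bx)\equiv0$, so $G_\a$ carries no principal symbol and the bulk contribution of every power is identically zero. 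Equivalently, modulo operators of order $\a^{d-1}\log\a$ one has the Szeg\H o-type identity $G_\a^* G_\a\equiv T_\a(|a|^2)-T_\a(a)^* T_\a(a)$, and the two $\a^d$-symbols $|a|^2$ and $\bar a\cdot a$ coincide and cancel. Thus only the boundary coefficient survives.

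Finally I would identify this surviving $\GW_1$-coefficient through the one-dimensional model at a boundary point. After localizing near $(\bx_0,\bxi_0)\in\p\L\times\p\Om$, flattening both boundaries and freezing $a(\bx_0,\bxi_0)$, the operator reduces to the model symmetrised Hankel operator built from the position cut-off $\chi_{(0,\infty)}(x)$ and the Fourier cut-off $\op(\chi_{(0,\infty)})$. A two-projection analysis shows that this model operator has purely absolutely continuous nonzero spectrum filling $(-|a|/2,|a|/2)\setminus\{0\}$ with spectral density proportional to $\bigl(t\sqrt{1-t^2}\bigr)^{-1}$ in the scaled variable $t=2\l/|a|$; integrating $g_{2p}$ against this density reproduces exactly $\GU(g_{2p};|a|)$ of \eqref{gu:eq}, the factor $\log\a$ arising from the $t^{-1}$ singularity at the bottom of the spectrum, regularized by the truncation scale $\a$. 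Integrating this local density over $\p\L\times\p\Om$ against the normal-overlap weight of \eqref{w1:eq} then produces $\GW_1(\GU(g_{2p};|a|);\p\L,\p\Om)$ and hence \eqref{main_poly:eq}.

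The main obstacle I expect is making this last step rigorous and uniform in $p$: justifying the localization and symbol-freezing with errors that are genuinely $o(\a^{d-1}\log\a)$, tracking the non-idempotency $P_{\Om,\a}^2\ne P_{\Om,\a}$ and the discrepancy between the left and right quantizations (both of which are themselves boundary effects of size $\a^{d-1}\log\a$ and so must be kept, not discarded), and carrying out the explicit two-projection computation of the model spectral density that pins down the constant $2/\pi^2$ in \eqref{gu:eq}.
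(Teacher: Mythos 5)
Your opening reduction is exactly the paper's: from $G_\a^2=(G_\a^*)^2=0$ you get $H_\a^2=G_\a G_\a^*+G_\a^*G_\a$ as an orthogonal sum whose two blocks have the same nonzero spectrum, hence $\tr g_{2p}(H_\a)=2\tr g_p(G_\a^*G_\a)$, and you correctly diagnose why the $\a^d$ term must vanish. The genuine gap is in your third step, the identification of the $\GW_1$ coefficient. You propose to localize at a point of $\p\L\times\p\Om$, flatten both boundaries, freeze the symbol, and compute the spectral density of a model two-projection operator. That is not a proof but a research program --- essentially a re-derivation of the hardest part of \cite{Sob} in the Hankel setting --- and you leave every piece of its analytic content (uniform localization errors, the model density, the source of the $\log\a$) as an acknowledged obstacle. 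The paper bypasses all of it with an exact algebraic identity that your proposal misses. Since $P_{\Om,\a}=\op_\a(\chi_\Om)$ is a Fourier multiplier by a characteristic function, it is a genuine orthogonal projection (your worry about ``non-idempotency $P_{\Om,\a}^2\neq P_{\Om,\a}$'' is unfounded, and is a sign that the key structure went unnoticed). Therefore, for the symbol $a\equiv 1$ and $T=T_\a(1)=\chi_\L P_{\Om,\a}\chi_\L$,
\begin{equation*}
G_\a(1)^*G_\a(1)=\chi_\L P_{\Om,\a}(1-\chi_\L)P_{\Om,\a}\chi_\L=T-T^2
\end{equation*}
holds exactly, so $g_p\bigl(G_\a(1)^*G_\a(1)\bigr)=h(T)$ with the polynomial $h(t)=g_p(t-t^2)$.

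From here the coefficient is simply read off the already-known local Widom asymptotics, Proposition \ref{sobw:prop}, applied to $\tr\bigl(\op_\a^l(b)h(T)\bigr)$: the volume term vanishes because $h(1)=0$, and the substitution $t(1-t)=s^2/4$ in \eqref{GA:eq} gives $\GA(h;1)=\tfrac12\GU(g_{2p};1)$ --- this elementary change of variables, not a model spectral computation, is where the density $\bigl(s\sqrt{1-s^2}\bigr)^{-1}$ of \eqref{gu:eq} comes from. The general symbol is then restored by peeling it off as $\op_\a^l(|a|^{2p})$ modulo $O(\a^{d-1})$ in trace norm using the commutator and product estimates of Propositions \ref{product:prop} and \ref{sandwich:prop} (your approximate identity $G_\a^*G_\a\equiv T_\a(|a|^2)-T_\a(a)^*T_\a(a)$ points in a similar direction but is never exploited). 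Without the exact identity $G_\a(1)^*G_\a(1)=T-T^2$, or an equally explicit substitute that converts the Hankel problem into a known Toeplitz-type asymptotics, your argument does not close.
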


Once this theorem is proved, the asymptotics is extended to functions
$g$ satisfying the conditions of Theorem \ref{main:thm}
with the help of an elementary trace estimate for $g(H_\a)$.


\section{Auxiliary estimates}

The proof relies on various trace class estimates, some of which were obtained in \cite{Sob}.
In these estimates we always assume that the symbols $a$ and $b$ satisfy the condition
\eqref{dersym:eq} and that the domains $\L, \Om$ are as in Theorem \ref{main:thm}.

We begin with well known estimates for operators with smooth symbols:

\begin{prop}\label{product:prop}
Let $d\ge 1$ and $\a\ge c$.
Suppose that the symbols $a, b$ satisfy \eqref{dersym:eq}. Then
\begin{equation*}
\|\op_{\a}^l(a) \| + \|\op_{\a}^r(a)\|\le C.
\end{equation*}
If, in addition, $a$ and $b$ are compactly supported in both variables, then
\begin{gather*}
\|\op_\a^l(a) - \op_\a^r(a)\|_{\GS_1}\le C\a^{d-1},\\[0.2cm]
\| \op_\a^l(a) \op_\a^l(b) - \op_\a^l(ab)\|_{\GS_1}\le
C \a^{d-1}.
\end{gather*}
\end{prop}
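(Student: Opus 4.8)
The plan is to build all three estimates on a single rescaling together with two base trace-ideal bounds for one compactly supported symbol. Substituting $\bxi\mapsto\a^{-1}\bxi$ in the defining integrals gives $\op_\a^l(a)=\op_1^l(a_\a)$ and $\op_\a^r(a)=\op_1^r(a_\a)$, where $a_\a(\bx,\bxi):=a(\bx,\a^{-1}\bxi)$. Since $\nabla_\bx^n\nabla_\bxi^m a_\a=\a^{-m}(\nabla_\bx^n\nabla_\bxi^m a)(\bx,\a^{-1}\bxi)$, the rescaled symbol $a_\a$ obeys the bounds \eqref{dersym:eq} uniformly in $\a\ge 1$. For the boundedness claim the operator $\op_1^l(a_\a)$ is an ordinary ($\a=1$) pseudodifferential operator whose symbol has derivatives bounded uniformly in $\a$ up to order $d+2$ in each variable, which comfortably exceeds the number of derivatives required by the Calder\'on--Vaillancourt theorem; this yields $\|\op_\a^l(a)\|=\|\op_1^l(a_\a)\|\le C$ for $\a\ge c$, and likewise for $\op_\a^r$.

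Next I record the two base estimates for a single compactly supported symbol $b$ satisfying \eqref{dersym:eq}. Plancherel applied to the kernel $(\a/2\pi)^d\,\tilde b(\bx,\a(\by-\bx))$, with $\tilde b$ the Fourier transform of $b$ in the second variable, gives the Hilbert--Schmidt bound $\|\op_\a^l(b)\|_{\GS_2}\le C\a^{d/2}$. The trace-class bound $\|\op_\a^l(b)\|_{\GS_1}\le C\a^d$ is the workhorse: it follows from a standard trace-ideal bound of the form $\|\op_1^l(c)\|_{\GS_1}\le C\sum_{|\alpha|,|\beta|\le k}\iint|\partial_\bx^\alpha\partial_\bxi^\beta c(\bx,\bxi)|\,d\bx\,d\bxi$, valid for compactly supported $c$, applied to $c=b_\a$, since the $\bxi$-rescaling produces the overall factor $\a^d$ from the $\beta=0$ term (all others being smaller) and $k\le d+2$ is covered by \eqref{dersym:eq}; alternatively this bound is available from \cite{Sob}. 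I also need the evident amplitude (double-symbol) version of this $\GS_1$-estimate, in which $c=c(\bx,\by,\bxi)$ is compactly supported, proved in the same way.

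With these in hand the last two estimates are each a gain of one power of $\a$. For the difference, the kernel of $\op_\a^l(a)-\op_\a^r(a)$ is $(\a/2\pi)^d\int e^{i\a(\bx-\by)\bxi}[a(\bx,\bxi)-a(\by,\bxi)]\,d\bxi$; writing $a(\bx,\bxi)-a(\by,\bxi)=(\bx-\by)\cdot\int_0^1\nabla_\bx a(\by+s(\bx-\by),\bxi)\,ds$ and using $(\bx-\by)e^{i\a(\bx-\by)\bxi}=(i\a)^{-1}\nabla_\bxi e^{i\a(\bx-\by)\bxi}$, one integration by parts in $\bxi$ exhibits this kernel as $\a^{-1}$ times the kernel of an amplitude operator whose symbol is still compactly supported and satisfies \eqref{dersym:eq} (one $\bxi$-derivative is consumed, leaving ample room). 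The amplitude $\GS_1$-bound then gives $C\a^{-1}\cdot\a^d=C\a^{d-1}$. For the product I would write the composition $\op_\a^l(a)\op_\a^l(b)$ as the triple oscillatory integral $(\a/2\pi)^{2d}\iiint e^{i\a(\bx-\bz)\bxi+i\a(\bz-\by)\boldeta}a(\bx,\bxi)b(\bz,\boldeta)\,d\bz\,d\bxi\,d\boldeta$, extract the leading contribution at $\bz=\bx,\ \boldeta=\bxi$ (which is exactly $\op_\a^l(ab)$), and Taylor-expand the amplitude in $\bz$ about $\bx$; the first-order remainder carries a factor $\bz-\bx$ that one integration by parts in $\boldeta$ converts into $\a^{-1}$ times a good compactly supported amplitude, so the base estimate again yields $C\a^{d-1}$.

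The routine parts are the rescaling, the Hilbert--Schmidt bound, and boundedness (a direct appeal to Calder\'on--Vaillancourt). The main obstacle is controlling the remainder terms in the \emph{trace} norm rather than merely the operator norm: this is precisely what forces the amplitude version of the $O(\a^d)$ bound and the full strength of \eqref{dersym:eq} with $d+2$ derivatives, and it is the source of the tedious oscillatory-integral bookkeeping in the composition step. Once the factor $\a^{-1}$ is isolated and fed into the base $\GS_1$-estimate, however, everything reduces to standard pseudodifferential calculus.
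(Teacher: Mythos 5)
The paper offers no proof of this proposition: the norm bound is attributed to \cite{Cordes} and the two trace-class estimates to Lemma 3.12 and Corollary 3.13 of \cite{Sob}, so your sketch is being measured against those references rather than against an argument in the text. Your overall strategy --- rescale $\bxi\mapsto\a^{-1}\bxi$, invoke Calder\'on--Vaillancourt for uniform boundedness, use an $L^1$-of-derivatives trace-class criterion to get the base estimate $\|\op_\a^l(b)\|_{\GS_1}\le C\a^d$, and then gain one power of $\a$ by a first-order Taylor expansion combined with integration by parts in the dual variable --- is the standard route and is essentially what the cited lemmas carry out.

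There is, however, one concrete step that fails as written. After expressing $a(\bx,\bxi)-a(\by,\bxi)=(\bx-\by)\cdot\int_0^1\nabla_{\bx}a(\by+s(\bx-\by),\bxi)\,ds$ and integrating by parts in $\bxi$, you assert that the resulting amplitude ``is still compactly supported''. It is not: the function $(\bx,\by)\mapsto\int_0^1\nabla_{\bx}a((1-s)\by+s\bx,\bxi)\,ds$ is supported on the set of pairs whose connecting segment meets the compact $\bx$-support of $a$, which is unbounded in $(\bx,\by)$; worse, its $L^1$ norm diverges (for fixed $s$ the substitution $\bu=(1-s)\by+s\bx$ leaves a free $\by$-integration over all of $\R^d$). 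Hence the amplitude version of your base $\GS_1$-estimate cannot be applied to it, and the same defect occurs in the composition step with the intermediate variable $\bz$. The repair is standard but is a genuinely additional ingredient: choose $\psi\in\co$ equal to $1$ on a neighbourhood of the $\bx$-support of $a$, write $a(\bx,\bxi)-a(\by,\bxi)=\psi(\bx)\psi(\by)\bigl(a(\bx,\bxi)-a(\by,\bxi)\bigr)+(1-\psi(\by))a(\bx,\bxi)-(1-\psi(\bx))a(\by,\bxi)$, Taylor-expand only the first, genuinely compactly supported piece, and dispose of the remaining two --- the operators $\op_\a^l(a)(1-\psi)$ and $(1-\psi)\op_\a^r(a)$, whose kernels live at distance at least $\delta>0$ from the diagonal --- by repeated integration by parts in $\bxi$ (this is where the $d+2$ derivatives in \eqref{dersym:eq} are consumed), showing they are $O(\a^{-N})$ in trace norm. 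With that localization inserted, the rest of your argument goes through.
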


The above boundedness is a classical fact, and it can be found, e.g. in \cite{Cordes},
Theorem $B_1'$, where it was established under
smoothness assumptions weaker than \eqref{dersym:eq}.
For the trace class estimates see e.g. \cite{Sob}, Lemma 3.12, Corollary 3.13.

The following estimates are for operators with discontinuous symbols.

\begin{prop}\label{sandwich:prop}
Suppose that the symbol $a$ satisfies \eqref{dersym:eq} and
has a compact support in both variables.
Assume that $\a\ge c$.
Let $\op_\a(a)$ denote any of the operators
$\op^l_\a(a)$ or $\op^r_\a(a)$.  Then
\begin{equation*}
\|[\op_\a(a), P_{\Om, \a}]\|_{\GS_1} +
\|[\op_\a(a), \chi_\L]\|_{\GS_1}\le C  \a^{d-1}.
\end{equation*}

\end{prop}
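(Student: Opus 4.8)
The plan is to reduce both commutators to a single model estimate, namely
\[
\|[\op_\a^l(a),\chi_D]\|_{\GS_1}\le C\a^{d-1}
\]
for multiplication by the characteristic function of a bounded $\plainC1$ domain $D$, and to prove the latter by a direct analysis of the integral kernel localised to a thin neighbourhood of $\p D$. Since left multiplication absorbs into the symbol, $\chi_D\op_\a^l(a)=\op_\a^l(\chi_D a)$, the commutator $[\op_\a^l(a),\chi_D]$ has the kernel
\[
K(\bx,\by)=\Bigl(\frac{\a}{2\pi}\Bigr)^{d}\,\hat a\bigl(\bx,\a(\bx-\by)\bigr)\bigl(\chi_D(\by)-\chi_D(\bx)\bigr),\qquad \hat a(\bx,\bz)=\int e^{i\bz\cdot\bxi}a(\bx,\bxi)\,d\bxi.
\]
By \eqref{dersym:eq} and the compact support of $a$, the amplitude $\hat a(\bx,\bz)$ is bounded, supported in a fixed compact set in $\bx$, and rapidly decaying, $|\hat a(\bx,\bz)|\le C_N\langle\bz\rangle^{-N}$ for $N\le d+2$. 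The right-quantisation case follows from the left one: taking adjoints and using $(\op_\a^l(a))^*=\op_\a^r(\bar a)$ together with the self-adjointness of $\chi_D$ turns $[\op_\a^l(a),\chi_D]$ into $-[\op_\a^r(\bar a),\chi_D]$, which has the same $\GS_1$-norm (and $\bar a$ again satisfies the hypotheses).

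The momentum-side commutator is reduced to the position-side one by conjugation with the $\a$-scaled Fourier transform $(\Phi_\a u)(\bxi)=(\a/2\pi)^{d/2}\int e^{-i\a\bx\cdot\bxi}u(\bx)\,d\bx$, which is unitary. Under $\Phi_\a$ the multiplier $P_{\Om,\a}=\op_\a(\chi_\Om)$ becomes multiplication by $\chi_\Om$, while $\op_\a^l(a)$ becomes $\op_\a^r(b)$ with a symbol $b$ obtained from $a$ by interchanging $\bx$ and $\bxi$ (up to a sign), and therefore again satisfying \eqref{dersym:eq} and compactly supported, the bounds in \eqref{dersym:eq} being symmetric in $\bx$ and $\bxi$. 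By the unitary invariance of $\|\cdot\|_{\GS_1}$ it thus suffices to prove the model estimate: the choice $D=\L$ yields the second commutator, and the choice $D=\Om$ (with the transformed symbol) yields the first.

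It remains to prove the model estimate. The factor $\chi_D(\by)-\chi_D(\bx)$ vanishes unless $\bx,\by$ lie on opposite sides of $\p D$, which together with the concentration of $K$ at $|\bx-\by|\sim\a^{-1}$ confines the operator to an $\a^{-1}$-neighbourhood of the $(d-1)$-dimensional surface $\p D$. Substituting $\bz=\a(\bx-\by)$ and using $\mes\{\bx:\chi_D(\bx-\bz/\a)\ne\chi_D(\bx)\}\lesssim|\bz|\a^{-1}$ gives at once
\[
\|K\|_{\GS_2}^2\lesssim\a^{d-1}\int\langle\bz\rangle^{-2N}|\bz|\,d\bz\lesssim\a^{d-1},
\]
the integral converging for $N\le d+2$. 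This yields only $\|K\|_{\GS_2}\lesssim\a^{(d-1)/2}$, the square root of what is wanted, so the crux is to upgrade the Hilbert--Schmidt scale to the trace-class scale. To this end I cover the $\a^{-1}$-neighbourhood of $\p D$ by $O(\a^{d-1})$ cubes $Q_j$ of side $\sim\a^{-1}$, take a smooth partition of unity $\{\phi_j\}$ adapted to a finite-overlap enlargement of this cover, and split $[\op_\a^l(a),\chi_D]=\sum_{j,k}\phi_j[\op_\a^l(a),\chi_D]\phi_k$. Rescaling each block to unit scale by $\bx=\bx_j+\a^{-1}\bx'$ turns the diagonal block $\phi_j[\op_\a^l(a),\chi_D]\phi_j$ into a commutator, at fixed scale and over a bounded region, of a compactly supported order-zero symbol with the characteristic function of a nearly flat piece of $\p D$; such a commutator is trace class with $O(1)$ norm, uniformly in $j$ and $\a$, as one sees by flattening the boundary and computing the kernel in the half-space model, where the transverse variable produces a one-dimensional truncated-Hankel-type operator, and the $d+2$ derivatives in \eqref{dersym:eq} supply the decay making this uniform. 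The off-diagonal blocks are trace class with norm $\lesssim\langle\a\,\dist(Q_j,Q_k)\rangle^{-N}$, inherited from the decay of $\hat a$, so for fixed $j$ the sum over $k$ converges to $O(1)$. Summing over the $O(\a^{d-1})$ indices $j$ gives $\|[\op_\a^l(a),\chi_D]\|_{\GS_1}\le C\a^{d-1}$, and with the two reductions above this proves the proposition.

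The main obstacle is precisely this last passage from the Hilbert--Schmidt scale $\a^{(d-1)/2}$ to the trace-class scale $\a^{d-1}$. The composition estimate of Proposition \ref{product:prop} is not directly available, because $\chi_D$ (equivalently $\chi_\Om$) is discontinuous and no smooth symbol calculus applies to the jump. What saves the argument is the geometric fact that the operator is supported in an $\a^{-1}$-tube around a $(d-1)$-dimensional surface, combined with the fixed-scale rescaling that reduces each local block to a single $\a$-independent trace-class operator; this is what produces the correct power $\a^{d-1}$ rather than its square root.
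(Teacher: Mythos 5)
The paper does not actually prove this proposition: it is imported verbatim from \cite{Sob}, Lemmas 4.3 and 4.5, so there is no in-paper argument to compare yours against. Judged on its own, your outline is a credible reconstruction in the same spirit as the proofs in that reference: reduce both commutators (by $\a$-scaled Fourier conjugation, which swaps $P_{\Om,\a}$ for multiplication by $\chi_\Om$ and preserves the hypotheses \eqref{dersym:eq}, and by taking adjoints to pass between left and right quantisations) to a single model bound $\|[\op_\a^l(a),\chi_D]\|_{\GS_1}\le C\a^{d-1}$ for a bounded $\plainC1$ domain $D$; then localise to the boundary and count $O(\a^{d-1})$ unit-scale trace-class blocks. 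The kernel formula, the decay $|\hat a(\bx,\bz)|\lesssim\lu\bz\ru^{-(d+2)}$, the Hilbert--Schmidt computation, and your diagnosis that the whole difficulty is upgrading $\a^{(d-1)/2}$ in $\GS_2$ to $\a^{d-1}$ in $\GS_1$ are all correct.

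Two places in the final step are thinner than they should be, though both are reparable. First, your partition of unity covers only an $\a^{-1}$-neighbourhood of $\p D$, so $\sum_{j,k}\phi_j[\op_\a^l(a),\chi_D]\phi_k$ is not the whole operator: the kernel does not vanish when $\bx$ and $\by$ lie on opposite sides of $\p D$ but far from it, and this contribution, while rapidly decaying, must be covered and summed as well (the count works because a pair of $\a^{-1}$-cubes can only contribute if $\p D$ separates them, forcing $\a\,\dist(Q_j,Q_k)\gtrsim\max_j\a\,\dist(Q_j,\p D)$, but you should say so). Second, the diagonal-block argument via boundary flattening and a half-space Hankel model is more elaborate than necessary and is the least convincing part of the sketch as written; the clean route is the identity
\begin{equation*}
\chi_D(\by)-\chi_D(\bx)=\chi_D(\by)\bigl(1-\chi_D(\bx)\bigr)-\chi_D(\bx)\bigl(1-\chi_D(\by)\bigr),
\end{equation*}
which gives $\|\phi_j[\op_\a^l(a),\chi_D]\phi_k\|_{\GS_1}\le 2\,\|\phi_j\op_\a^l(a)\phi_k\|_{\GS_1}$ for \emph{every} block, reducing everything to the standard trace-norm bound for an operator with a smooth, rapidly off-diagonally decaying kernel between two unit-scale cubes (this is where the $d+2$ derivatives in \eqref{dersym:eq} are spent), with the characteristic functions riding along as contractions. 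With those two points filled in, your argument gives the stated estimate.
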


See \cite{Sob}, Lemmas 4.3 and 4.5.

The next proposition is a crucial ingredient in our proof:
Theorem \ref{main_poly:thm} is derived from the following
local version of the asymptotics \eqref{widom:eq}.

\begin{prop}\label{sobw:prop}
Suppose that the symbols $a$ and $b$ satisfy \eqref{dersym:eq},
and that $b$ has a compact supports in both variables.
Then for $g_p(t) = t^p, p= 1, 2,  \dots, $
\begin{align}\label{toeplitz:eq}
\tr \bigl(\op_\a^l(b) g_p(T_\a(a))\bigr)
= &\ \a^d \GW_0(b g_p(a); \L, \Om)\notag\\[0.2cm]
&\ + \a^{d-1} \log\a\  \GW_1(b\GA(g_p; a); \p\L, \p\Om) + o(\a^{d-1}\log\a),
\end{align}
as $\a\to\infty$.
\end{prop}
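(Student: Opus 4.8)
The plan is to derive Proposition~\ref{sobw:prop} from the global Widom asymptotics~\eqref{widom:eq} by a localization argument. The key observation is that~\eqref{widom:eq} already gives the trace of $g_p(T_\a(a))$ itself; here we must insert the auxiliary factor $\op_\a^l(b)$. First I would treat the principal term. Using the commutator estimates of Proposition~\ref{sandwich:prop} together with the symbol-composition estimates of Proposition~\ref{product:prop}, one can move $\op_\a^l(b)$ inside the cutoffs and the functional calculus of $T_\a(a)$ at a cost of $O(\a^{d-1})$ in trace norm per commutation. Since $T_\a(a)$ is bounded and $g_p$ is a fixed polynomial, each of the $p$ factors of $T_\a(a)$ can be rewritten so that $\op_\a^l(b)$ effectively multiplies the symbol; the heuristic target is that $\op_\a^l(b)\,g_p(T_\a(a))$ has the same leading asymptotics as $T_\a$ with symbol $b\,g_p(a)$ on the volume scale $\a^d$.

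The cleanest route is a \emph{polarization/perturbation} reduction. Consider the two-symbol operator $T_\a(a,b)=\chi_\L P_{\Om,\a}\op_\a^l(b)\op_\a^l(a)P_{\Om,\a}\chi_\L$ and, more generally, the version built from the product $b\,g_p(a)$; the aim is to show
\begin{equation*}
\tr\bigl(\op_\a^l(b)\,g_p(T_\a(a))\bigr)
= \tr\bigl(g_p(T_\a(a)) \text{ with the extra symbol } b\bigr) + O(\a^{d-1}),
\end{equation*}
where the right side is governed by~\eqref{widom:eq}. Concretely, I would iterate the identities $\op_\a^l(b)\op_\a^l(a)=\op_\a^l(ba)+O(\a^{d-1})_{\GS_1}$ and $[\op_\a^l(b),P_{\Om,\a}]=O(\a^{d-1})_{\GS_1}$, $[\op_\a^l(b),\chi_\L]=O(\a^{d-1})_{\GS_1}$. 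Because $g_p$ has degree $p$, there are only finitely many commutations, each contributing $O(\a^{d-1})$, hence negligible against $\a^{d-1}\log\a$; so the insertion of $\op_\a^l(b)$ amounts, modulo $o(\a^{d-1}\log\a)$, to replacing the symbol $a^p$ by $b\,a^p$ in the leading and subleading Widom coefficients. This is exactly why both coefficients in~\eqref{toeplitz:eq} carry the extra factor $b$: the volume term becomes $\GW_0(b\,g_p(a))$ and the surface term $\GW_1(b\,\GA(g_p;a))$.

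The main obstacle will be the subleading $\a^{d-1}\log\a$ term, because the $O(\a^{d-1})$ error bounds from the commutator estimates are, individually, of the same polynomial order $\a^{d-1}$ and are only saved by the logarithmic gap. One must therefore verify that the accumulated commutator errors are genuinely $O(\a^{d-1})$ and not $O(\a^{d-1}\log\a)$, i.e. that no logarithm is generated by the rearrangement itself; this requires the sharp (log-free) trace bounds of Propositions~\ref{product:prop}–\ref{sandwich:prop} rather than merely order-of-magnitude estimates. The correct way to handle this is to note that the entire $\log\a$ contribution originates from the single discontinuity interaction already captured in~\eqref{widom:eq}, so inserting a \emph{smooth} factor $\op_\a^l(b)$ cannot produce any new boundary singularity. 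I would make this rigorous by writing $\op_\a^l(b)g_p(T_\a)$ as $b\,g_p(T_\a)$-type expression up to smooth-symbol commutators and invoking the known formula~\eqref{widom:eq} for the two-symbol Widom trace—this variant of~\eqref{widom:eq} with a smooth weight $b$ is part of the results proved in~\cite{Sob}, so the cleanest exposition is simply to cite it and reduce~\eqref{toeplitz:eq} to it via the commutator bookkeeping above.
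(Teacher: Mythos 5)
The paper does not actually prove this proposition: it is imported verbatim from \cite{Sob}, Theorem 11.1 (the ``proof'' in the text is the single line ``See \cite{Sob}, Theorem 11.1''). So your closing recommendation --- cite the weighted variant from \cite{Sob} --- is precisely what the author does, and as an exposition choice your proposal ends up in the same place.

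However, the derivation you sketch from \eqref{widom:eq} has a concrete gap at its central step. After the commutator bookkeeping (which is fine: Propositions \ref{product:prop} and \ref{sandwich:prop} do give $\op_\a^l(b)\,g_p(T_\a(a)) = T_\a(ba)\,T_\a(a)^{p-1} + O(\a^{d-1})_{\GS_1}$), you are left with the trace of a \emph{product of Toeplitz-type operators with two different symbols}. This is not of the form $\tr g(T_\a(\tilde a))$ for any single symbol $\tilde a$ and function $g$, so \eqref{widom:eq} does not apply to it directly; saying the right side is ``governed by \eqref{widom:eq}'' is exactly the point that needs an argument. You name ``polarization'' but never execute it, and it is the missing idea: the functional $F(c)=\tr T_\a(c)^p$ is homogeneous of degree $p$, its symmetrization (the symmetric $p$-linear form obtained by the polarization identity) evaluated at $(ba,a,\dots,a)$ equals $\tr\bigl(T_\a(ba)T_\a(a)^{p-1}\bigr)$ because every permutation of that tuple is a cyclic shift and the trace is cyclic; the polarization identity then writes this as a finite linear combination of $\tr T_\a(s\,ba+t\,a)^p$, to each of which \eqref{widom:eq} applies. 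Since $\GW_0(c^p)$ and $\GA(g_p;c)=\kappa_p\,c^p$ are themselves $p$-homogeneous in $c$, the polarized coefficients come out as $\GW_0(b\,g_p(a))$ and $\GW_1(b\,\GA(g_p;a))$, as claimed. Without this step (or the direct citation of the weighted result in \cite{Sob}) the argument is incomplete; with it, your route is a legitimate alternative to quoting \cite{Sob}, Theorem 11.1, modulo the technical care you correctly flag about keeping all rearrangement errors at $O(\a^{d-1})$ rather than $O(\a^{d-1}\log\a)$, and about which of the two symbols must be compactly supported when invoking Propositions \ref{product:prop} and \ref{sandwich:prop}.
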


See \cite{Sob}, Theorem 11.1.

The following Hilbert-Schmidt estimate is important
when deriving Theorem \ref{main:thm} from Theorem
\ref{main_poly:thm}.

\begin{prop}\label{HS:prop} Suppose that the symbol $a$ satisfies \eqref{dersym:eq} and has
a compact support in both variables. Assume that $\a \ge 2$. Then
\begin{equation*}
\| G_\a(a)\|_{\GS_2}^2\le C\a^{d-1}\log\a.
\end{equation*}
\end{prop}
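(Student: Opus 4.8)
The plan is to peel the off-diagonal operator $G_\a(a)=(1-\chi_\L)P_{\Om,\a}\op_\a^l(a)P_{\Om,\a}\chi_\L$ down to a single geometric object, the commutator $[P_{\Om,\a},\chi_\L]$, whose squared Hilbert--Schmidt norm carries the whole $\a^{d-1}\log\a$ behaviour, the remaining pieces being controlled by the smooth-symbol commutator $[\op_\a^l(a),\chi_\L]$ and the soft bounds of Propositions \ref{product:prop}--\ref{sandwich:prop}. Writing $A=\op_\a^l(a)$ and $P=P_{\Om,\a}$ (an orthogonal projection, so $P=P^*=P^2$, $\|P\|=1$), I would first commute $\chi_\L$ to the left through $PAP$ by the routine expansion
\[
PAP\chi_\L=\chi_\L PAP+[P,\chi_\L]AP+P[A,\chi_\L]P+PA[P,\chi_\L].
\]
Since $(1-\chi_\L)\chi_\L=0$, multiplying by $1-\chi_\L$ kills the first term and gives
\[
G_\a(a)=(1-\chi_\L)\bigl([P,\chi_\L]AP+P[A,\chi_\L]P+PA[P,\chi_\L]\bigr).
\]
Using $\|1-\chi_\L\|\le1$, $\|P\|=1$, $\|A\|\le C$ (Proposition \ref{product:prop}) together with $\|XYZ\|_{\GS_2}\le\|X\|\,\|Y\|_{\GS_2}\,\|Z\|$, this yields $\|G_\a(a)\|_{\GS_2}\le C\|[P,\chi_\L]\|_{\GS_2}+C\|[A,\chi_\L]\|_{\GS_2}$.

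The smooth-symbol term costs no logarithm. Here I would invoke the elementary inequality $\|T\|_{\GS_2}^2\le\|T\|\,\|T\|_{\GS_1}$ with $T=[A,\chi_\L]$: Proposition \ref{sandwich:prop} gives $\|[A,\chi_\L]\|_{\GS_1}\le C\a^{d-1}$, while $\|[A,\chi_\L]\|\le2\|A\|\le C$ by Proposition \ref{product:prop}, so $\|[A,\chi_\L]\|_{\GS_2}^2\le C\a^{d-1}$, comfortably within budget for $\a\ge2$. Everything now hinges on the geometric estimate $\|[P_{\Om,\a},\chi_\L]\|_{\GS_2}^2\le C\a^{d-1}\log\a$; this is the only source of the logarithm and the genuine obstacle, since it needs the Szeg\H{o}--Widom asymptotics rather than the commutator bookkeeping above.

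To reach it I would use the algebraic identity, valid because $P=P^*=P^2$, $\chi_\L^2=\chi_\L$, and the trace is cyclic,
\[
\|[P,\chi_\L]\|_{\GS_2}^2=-\tr[P,\chi_\L]^2=2\tr\bigl(T_\a(1)\bigr)-2\tr\bigl(T_\a(1)^2\bigr),
\]
where $T_\a(1)=\chi_\L P\chi_\L$ since $\op_\a^l(1)=I$. The two traces are then supplied by the Widom formula \eqref{widom:eq} applied with symbol $a\equiv1$ and $g=g_1,g_2$: the $\a^d$ volume terms cancel because $g_1(1)=g_2(1)$, and since $\GA(g_1;1)=0$ while $\GA(g_2;1)=-(2\pi)^{-2}$, one is left with
\[
\|[P,\chi_\L]\|_{\GS_2}^2=\frac{2}{(2\pi)^2}\,\a^{d-1}\log\a\;\GW_1(1;\p\L,\p\Om)+o(\a^{d-1}\log\a),
\]
which is nonnegative and, in particular, $\le C\a^{d-1}\log\a$. (The constant symbol $a\equiv1$ is not compactly supported, but all operators here are localised by the bounded cutoffs $\chi_\L$ and $\chi_\Om$, so one may replace it by a compactly supported symbol equal to $1$ near $\L$ in $\bx$ and near $\Om$ in $\bxi$, the resulting change in $T_\a$ being trace-negligible by Proposition \ref{sandwich:prop}; alternatively this Hilbert--Schmidt bound is available directly from \cite{Sob}.) Combining the three estimates gives $\|G_\a(a)\|_{\GS_2}^2\le C\a^{d-1}\log\a+C\a^{d-1}\le C\a^{d-1}\log\a$ for $\a\ge2$, as claimed. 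The hard part is squarely the commutator estimate $\|[P_{\Om,\a},\chi_\L]\|_{\GS_2}^2\le C\a^{d-1}\log\a$; the rest is soft reduction.
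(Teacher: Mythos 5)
Your argument is sound in outline, but it is worth noting at the outset that the paper does not prove this proposition at all: it simply cites \cite{Sob}, Lemma 6.1 (and Gioev's work), where the bound is obtained by a direct estimate of the integral kernel --- essentially computing $\|(1-\chi_\L)P_{\Om,\a}\chi_\L\|_{\GS_2}^2$ as an explicit double integral of $|\check\chi_\Om(\a(\bx-\by))|^2$ over $\L\times\complement\L$ and extracting the logarithm from the $|\bx-\by|^{-d}$ decay near $\p\L$. Your route is genuinely different: the commutator bookkeeping reducing $G_\a(a)$ to $[P_{\Om,\a},\chi_\L]$ and $[\op_\a^l(a),\chi_\L]$ is correct (the expansion of $P A P\chi_\L$, the $\GS_2$--$\GS_1$ interpolation $\|T\|_{\GS_2}^2\le\|T\|\,\|T\|_{\GS_1}$, and the trace identity $\|[P,\chi_\L]\|_{\GS_2}^2=2\tr T_\a(1)-2\tr T_\a(1)^2$ all check out, as do the values $\GA(g_1;1)=0$, $\GA(g_2;1)=-(2\pi)^{-2}$). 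What your approach buys is economy of new ingredients --- everything is pulled from results already quoted in the paper; what it costs is that you derive a soft a priori upper bound from the full two-term Szeg\H{o}--Widom asymptotics, which is a much deeper fact than is needed (and note that inside \cite{Sob} itself this would be circular, since the Hilbert--Schmidt bound is used there on the way to the asymptotics; within the present paper's logical structure, where \eqref{widom:eq} is imported as a black box and Proposition \ref{HS:prop} is only used to pass from polynomials to general $g$, there is no circularity).

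The one step you wave at rather than nail down is the application of \eqref{widom:eq} with $a\equiv 1$. The constant symbol violates the compact-support hypothesis, and your proposed fix --- replacing $1$ by a cutoff equal to $1$ near $\L\times\Om$ with the difference ``trace-negligible by Proposition \ref{sandwich:prop}'' --- does not follow from that proposition, which controls commutators, not products $\chi_\L P_{\Om,\a}\op_\a^l(1-a)P_{\Om,\a}\chi_\L$ with symbols supported away from $\L\times\Om$; one needs a separate disjoint-supports estimate (available in \cite{Sob} but not quoted here). A cleaner patch within your own scheme: $\tr T_\a(1)=\|P_{\Om,\a}\chi_\L\|_{\GS_2}^2=(2\pi)^{-d}\a^d|\L|\,|\Om|$ is exact, so only the asymptotics of $\tr T_\a(1)^2$ is needed, and that is again an explicit two-fold integral --- at which point you are back to the direct kernel computation that \cite{Sob}, Lemma 6.1 performs. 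So the proposal is acceptable as a derivation from the quoted black boxes, with that one localisation step left informal.
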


See \cite{Sob}, Lemma 6.1, and also papers \cite{G1, G2} by D. Gioev.
\section{Proof of Theorem \ref{main:thm}}

\subsection{Polynomial functions, reduction to the  operator $T_\a$}
We begin with studying the modulus of the operator $G_\a(1)$,
i.e. the operator $|G_\a(1)| = \sqrt{G_\a^*(1) G_\a(1)}$.

\begin{lem} \label{hantoto:lem} Suppose that the symbol $b$ satisfies
\eqref{dersym:eq} and has a compact support in both variables.
Then
\begin{equation*}
\tr \op(b) g_p\bigl(G_\a^*(1) G_\a(1)\bigr)
= \frac{1}{2}\a^{d-1}\log \a \ \GW_1\bigl(b\GU(g_{2p}; 1); \p\L, \p\Om\bigr)
+ o(\a^{d-1}\log\a),
\end{equation*}
as $\a\to\infty$.
\end{lem}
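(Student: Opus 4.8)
The plan is to reduce the lemma to the already-known local Widom asymptotics of Proposition~\ref{sobw:prop} by means of an exact algebraic identity for $G_\a^*(1)G_\a(1)$. Since $\op_\a^l(1)$ is the identity and $P_{\Om,\a}=\op_\a(\chi_\Om)$ is a Fourier multiplier by the real idempotent symbol $\chi_\Om(\bxi)$, it is a self-adjoint orthogonal projection: $P_{\Om,\a}^*=P_{\Om,\a}$ and $P_{\Om,\a}^2=P_{\Om,\a}$. Hence $G_\a(1)=(1-\chi_\L)P_{\Om,\a}\chi_\L$, and using $T_\a(1)=\chi_\L P_{\Om,\a}\chi_\L$ together with $\chi_\L^2=\chi_\L$ and $(1-\chi_\L)^2=1-\chi_\L$ one finds
\begin{equation*}
G_\a^*(1)G_\a(1)=\chi_\L P_{\Om,\a}(1-\chi_\L)P_{\Om,\a}\chi_\L=T_\a(1)-T_\a(1)^2.
\end{equation*}
This is the crucial step: it expresses the squared modulus of the Hankel operator entirely through the Toeplitz-type operator $T_\a(1)$.

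First I would expand, by the binomial theorem (the operator commutes with itself),
\begin{equation*}
g_p\bigl(G_\a^*(1)G_\a(1)\bigr)=\bigl(T_\a(1)-T_\a(1)^2\bigr)^p=\sum_{k=0}^p\binom{p}{k}(-1)^k\,T_\a(1)^{p+k}.
\end{equation*}
Multiplying by $\op(b)$ and taking the trace, I would apply Proposition~\ref{sobw:prop} with the symbol $a\equiv1$ and $g_{p+k}(t)=t^{p+k}$ to each term $\tr\op_\a^l(b)\,T_\a(1)^{p+k}$; this is legitimate because every exponent $p+k\ge 1$. Since $g_{p+k}(1)=1$, each $\a^d$ contribution equals $\a^d\,\GW_0(b;\L,\Om)$, and these are weighted by $\sum_{k=0}^p\binom{p}{k}(-1)^k=0$, so the volume term cancels identically — this is exactly why the leading order drops to $\a^{d-1}\log\a$. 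By linearity of $\GW_1$ in its symbol slot the surviving contribution is $\a^{d-1}\log\a\,\GW_1(b\,C_p;\p\L,\p\Om)+o(\a^{d-1}\log\a)$, with $C_p=\sum_{k=0}^p\binom{p}{k}(-1)^k\,\GA(g_{p+k};1)$.

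The one genuine computation, and the step I expect to need the most care, is the identification $C_p=\tfrac12\GU(g_{2p};1)$. Interchanging the finite sum with the integral defining $\GA$ in \eqref{GA:eq} and using $\sum_k\binom{p}{k}(-1)^k t^{p+k}=t^p(1-t)^p$ together with $\sum_k\binom{p}{k}(-1)^k=0$, the term linear in $t$ disappears and
\begin{equation*}
C_p=\frac{1}{(2\pi)^2}\int_0^1\frac{t^p(1-t)^p}{t(1-t)}\,dt=\frac{1}{(2\pi)^2}\int_0^1 t^{p-1}(1-t)^{p-1}\,dt=\frac{1}{(2\pi)^2}B(p,p).
\end{equation*}
On the other side, evaluating $\GU(g_{2p};1)$ from \eqref{gu:eq} by the substitution $u=t^2$ gives $\GU(g_{2p};1)=\tfrac{1}{2\pi^2}B(p,p)$ once one uses the duplication identity $B(p,\tfrac12)=2^{2p-1}B(p,p)$; hence both quantities equal $\tfrac{1}{(2\pi)^2}B(p,p)$ and $C_p=\tfrac12\GU(g_{2p};1)$ as required.

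Finally I would dispose of the quantization of $b$: the statement is written with $\op(b)$, whereas Proposition~\ref{sobw:prop} is for $\op_\a^l(b)$. By Proposition~\ref{product:prop}, $\op_\a^l(b)-\op_\a^r(b)=O(\a^{d-1})$ in $\GS_1$, and $g_p(G_\a^*(1)G_\a(1))$ is uniformly bounded (its spectrum lies in $[0,4^{-p}]$, since $t-t^2\le\tfrac14$ on $[0,1]$), so switching quantizations changes the trace only by $O(\a^{d-1})=o(\a^{d-1}\log\a)$ and does not affect the asymptotics. Collecting the surviving $\a^{d-1}\log\a$ term then yields the lemma. Conceptually the proof is entirely driven by the identity $G_\a^*(1)G_\a(1)=T_\a(1)-T_\a(1)^2$, with the only nontrivial labor being the Beta-function bookkeeping for the constant.
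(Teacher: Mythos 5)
Your proposal is correct and follows essentially the same route as the paper: both hinge on the identity $G_\a^*(1)G_\a(1)=T_\a(1)-T_\a(1)^2$ and then invoke Proposition~\ref{sobw:prop}, with the $\a^d$ term vanishing because $h(1)=0$ and the coefficient reducing to $\tfrac{1}{(2\pi)^2}B(p,p)=\tfrac12\GU(g_{2p};1)$. The only cosmetic differences are that you expand $(T-T^2)^p$ binomially and verify the constant via Beta-function identities, whereas the paper applies the proposition directly to the polynomial $h(t)=g_p(t-t^2)$ and evaluates the same integral by the substitution $t-t^2=s^2/4$.
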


\begin{proof} Denote $G = G_\a(1)$ and $T=T_\a(1)$.
By inspection, $G^* G = T - T^2$, so $g_p(G^*G) = h(T)$,
with the polynomial $h(t) = g_p(t - t^2)$. Thus one can use Proposition
\ref{sobw:prop}. Note that $h(1) = 0$, so that the first asymptotic coefficient
in \eqref{toeplitz:eq} vanishes, see \eqref{w0:eq}. Let us find the second
asymptotic coefficient, calculating $\GA(h; 1)$. Using \eqref{GA:eq} and \eqref{gu:eq}
we get
\begin{equation*}
\GA(h; 1) = \frac{2}{(2\pi)^2}\int_0^{\frac{1}{2}} \frac{g_p(t-t^2)}{t(1-t)} dt
= \frac{1}{\pi^2} \int_0^1 \frac{g_p(\frac{s^2}{4})}{s\sqrt{1-s^2}} ds
= \frac{1}{2}\GU(g_{2p}; 1).
\end{equation*}
Thus, by Proposition \ref{sobw:prop},
\begin{equation*}
\tr \bigl(\op_\a^l(b) h(T)\bigr)
= \frac{1}{2}\a^{d-1}\log \a \ \GW_1\bigl(b\GU(g_{2p}; 1); \p\L, \p\Om\bigr)
+ o(\a^{d-1}\log\a),
\end{equation*}
which leads to the proclaimed formula.
\end{proof}

Now we can prove the asymptotics for the operator $H_\a(a)$:

\begin{proof}[Proof of Theorem \ref{main_poly:thm}]
For any pair of operators $B_1, B_2$ we write $B_1\sim B_2$ if
\[
\|B_1-B_2\|_{\GS_1}\le C\a^{d-1}, \a\ge 1.
\]
Denote $H = H_\a(a), G = G_\a(a)$, and note
that $g_{2p}(H) = g_p(H^2)$. Since $G^2 = 0$, $(G^*)^2 = 0$, it follows that
\[
H^2 = GG^*+ G^*G.
\]
This sum is in fact an orthogonal sum of two operators acting
on the mutually orthogonal subspaces
$\plainL2(\complement\L)$ and $\plainL2(\L)$, where $\complement\L$
denotes the complement of $\L$.
Moreover, the non-zero spectra of $G^* G$ and $G G^*$ are the same.
Thus
\begin{equation}\label{htog:eq}
\tr g_{2p}(H) = \tr g_p(H^2) = 2 \tr g_p(G^*G).
\end{equation}
Using Propositions \ref{sandwich:prop} and \ref{product:prop}, we conclude  that
\begin{align*}
G\sim &\ \op_\a^l(a) G_\a(1)\sim G_\a(1) \op_\a^l(a),\\
G^*\sim &\ \op_\a^l(\overline{a}) G_\a^*(1)\sim G_\a^*(1) \op_\a^l(\overline{a}),\\
G^*G\sim &\ \op_\a^l(|a|^2) G_\a^*(1) G_\a(1).
\end{align*}
Thus referring again to Proposition \ref{product:prop}, we obtain that
\begin{equation*}
g_p(G^*G)\sim \op_\a^l(|a|^{2p}) g_p(G_\a^*(1)G_\a(1)).
\end{equation*}
Now it follows from Lemma \ref{hantoto:lem} and formula \eqref{htog:eq}
that
\begin{align*}
\tr g_{2p}(H) = &\ 2\tr \bigl(\op_\a^l(|a|^{2p})
g_p(G_\a^*(1) G_\a(1))\bigr) + O(\a^{d-1})\\[0.2cm]
= &\
\a^{d-1}\log \a \ \GW_1\bigl(|a|^{2p}\GU(g_{2p}; 1); \p\L, \p\Om\bigr)
+ o(\a^{d-1}\log\a).
\end{align*}
Since $|a|^{2p}\GU(g_{2p}; 1) = \GU(g_{2p}; |a|)$, this implies
the asymptotics \eqref{main_poly:eq}.  The proof of Theorem \ref{main_poly:thm} is now
complete.
\end{proof}

\subsection{Proof of Theorem \ref{main:thm}}
By virtue of \eqref{symmetry2:eq} we may assume that $g = g_{\textup{\tiny ev}}$.
Since $h(t) = t^{-2} g_{\textup{\tiny ev}}(t)$ is continuous, for any $\e>0$ there
exists an even polynomial $p = p(t)$ and a continuous function $r = r(t)$ such that
\begin{equation*}
g_{\textup{\tiny ev}}(t) = p(t) + t^2 r(t),\ \
\max_{t\in[-\|H_\a\|, \|H_\a\|]} |r(t)| \le \e.
\end{equation*}
In view of \eqref{main_poly:eq},
\begin{equation*}
\tr p(H_\a(a)) = \a^{d-1} \log\a \ \GW_1(\GU(p; a)) + o(\a^{d-1}\log \a).
\end{equation*}
Moreover, by the definition \eqref{gu:eq},
\begin{equation*}
|\GW_1(\GU(p; a)) -\GW_1(\GU(g_{\textup{\tiny ev}}; a))|\le C \e.
\end{equation*}
Let us estimate the error term:
\begin{align*}
\| g_{\textup{\tiny ev}}(H_\a(a)) - p(H_\a(a))\|_{\GS_1}
\le &\ \|r(H_\a(a))\|\ \|H_\a^2(a)\|_{\GS_1} \\[0.2cm]
\le &\ 2\e \|G_\a(a)\|_{\GS_2}^2
\le C\e \a^{d-1}\log\a,
\end{align*}
where we have used Proposition \ref{HS:prop}.
Collecting the above estimates together, we obtain
\begin{equation*}
\limsup_{\a\to\infty}\frac{1}{\a^{d-1}\log\a}
|\tr g(H_\a(a)) - \a^{d-1}\log\a\ \GW_1(\GU(g_{\textup{\tiny ev}}; a))|
\le C\e.
\end{equation*}
Since $\e>0$ is arbitrary, the asymptotics \eqref{main:eq} follows.
\bibliographystyle{amsplain}
\bibliography{bibmaster}

\end{document}